\newtheorem{theo}{Theorem}
\newtheorem{cor}{Corollary}
\newtheorem{lem}{Lemma}
\newtheorem{prop}{Proposition}
\theoremstyle{definition}
\theoremstyle{remark}
\newtheorem{rem}{\bf Remark\/}
\newtheorem{exple}{\bf Example\/}
\numberwithin{equation}{section}
\def\C{{\mathbb{C}}}
\newcommand{\ds}{\displaystyle}
\newcommand{\w}{\wedge}
\newcommand{\PSH}{\mathrm{PSH}}
\title[On the Lelong-Demailly numbers of PSH currents]{On the Lelong-Demailly numbers of plurisubharmonic currents}
\author[N. Ghiloufi]{Noureddine Ghiloufi}
\email{noureddine.ghiloufi@fsg.rnu.tn}
\address{Department of Mathematics\\ Faculty of sciences of Gab\`es \\ University of Gab\`es \\ 6072 Gab\`es Tunisia.}
\subjclass[2000]{32U25; 32U40; 32U05}
\keywords{Lelong number, plurisubharmonic current, plurisubharmonic function.}
\begin{document}
\maketitle

\begin{abstract}
    In this note we study the existence of the Lelong-Demailly number of a negative plurisubharmonic current with respect to a positive plurisubharmonic function on an open subset of $\C^n$. Then we establish some estimates of the Lelong-Demailly numbers of positive or negative plurisubharmonic currents.\\

    \textbf{Sur les Nombres de Lelong-Demailly des courants plurisous\-harmoniques.}\\
    \textsc{R\'esum\'e.} Dans cette note, on \'etudie l'existence du nombre de Lelong-Demailly d'un courant n\'egatif plurisous\-harmonique relativement \`a une fonction positive plurisousharmonique sur un ouvert de $\C^n$ puis on donne quelques estimations des nombres de Lelong-Demailly des courants positifs ou n\'egatifs plurisousharmoniques.
\end{abstract}
\vskip0.5cm
\hrule
\vskip0.5cm
\section*{Version fran\c{c}aise abr\'eg\'ee}
    L'existence des nombres de Lelong des courants positifs a \'et\'e r\'esolu par P.~Lelong dans les ann\'ees 1950 pour le cas des courants \emph{ferm\'es}, puis ce r\'esultat a \'et\'e \'etendu par Skoda au cas des courants \emph{positifs plurisousharmoniques}. En revanche, il existe des courants \emph{n\'egatifs plurisousharmoniques} qui n'admettent pas de nombres de Lelong, on peut voir par exemple que $\log(|z_2|^2)[z_1=0]$ est un exemple de courant  n\'egatif plurisousharmonique de bidimension $(1,1)$ sur la boule unit\'e de $\C^2$ qui n'admet pas de nombre de Lelong en 0.\\
    Le principal objectif de cette note est de traiter le probl\`eme de l'existence des nombres de Lelong g\'en\'eralis\'es introduits par Demailly \cite{De}, d'un courant n\'egatif plurisousharmonique $T$ de bidimension $(p,p)$ sur un ouvert $\Omega$ de $\C^n$; pour cela on note $\PSH(T,\Omega)$ l'ensemble des fonctions  $\varphi$ positives plurisousharmoniques semi-exhaustives dont le logarithme $\log\varphi$ est plurisousharmonique sur $\Omega$, et telles que le produit ext\'erieur $T\w(dd^c\varphi)^p$ soit bien d\'efini. Le nombre de Lelong-Demailly de $T$ relativement \`a un poids $\varphi$ tel que $\varphi\in\PSH(T,\Omega)$ est $\nu(T,\varphi):= \lim_{r\to0^+} \nu(T,\varphi,r)$, o\`u $\nu(T,\varphi,.)$ est la fonction d\'efinie  par
    $$\nu(T,\varphi,r):=\frac1{r^p}\int_{\{\varphi<r\}}T\w(dd^c\varphi)^p.$$
    Le r\'esultat principal de cette note est:\\

    \noindent\textbf{Th\'eor\`eme~\ref{theo1}.} \textit{Soient $T$ un courant n\'egatif plurisousharmonique de bidimension $(p,p)$ sur $\Omega$ et $\varphi\in\PSH(T,\Omega)$. Si la fonction $t\longmapsto \frac{\nu(dd^cT,\varphi,t)}{t}$ est int\'egrable au voisinage de $0$, alors le nombre de Lelong-Demailly $\nu(T,\varphi)$ du courant $T$ relativement \`a $\varphi$ existe.}
\section{Introduction}
    The existence of Lelong numbers of positive currents was proved by P.~Lelong in the 1950's in the  case of \emph{closed} currents, then this result was extended by H.~Skoda to the case of \emph{positive plurisubharmonic} currents. However, there are \emph{negative} plurisubharmonic  currents which do not admit Lelong numbers, for example $-(-\log(|z_2|^2))^\epsilon[z_1=0]$ is a negative plurisubharmonic current of bidimension $(1,1)$ on the unit ball of $\C^2$, which admits no Lelong number at 0 for all $0<\epsilon\leq1$.\\
    The main purpose of the first part of this note is to study the existence of generalized Lelong numbers, introduced by Demailly \cite{De}, in the case of  negative plurisubharmonic currents of bidimension $(p,p)$ on an open set $\Omega$ of $\C^n$. In the second part, we give some proprieties of the Lelong-Demailly numbers of positive or negative plurisubharmonic currents. In particular, we prove that the Lelong-Demailly numbers do not depend to the system of coordinates. To this aim, we consider a non-negative plurisubharmonic function $\varphi$ on $\Omega$ such that $\log\varphi$ is plurisubharmonic on $\{\varphi> 0\}$ and for every $r>0,\ r_2>r_1>0$, we set
    $$\begin{array}{l}
        \ds B_\varphi(r):=\{z\in\Omega;\ \varphi(z)<r\},\\
        \ds B_\varphi(r_1,r_2):=\{z\in\Omega;\ r_1\leq\varphi(z)<r_2\}=B_\varphi(r_2)\smallsetminus B_\varphi(r_1) \\
        \ds \beta_\varphi:=dd^c\varphi=\frac{i}{\pi}\partial\overline{\partial}\varphi,\quad \alpha_\varphi:=dd^c\log\varphi \hbox{ on } \{\varphi>0\}.
      \end{array}$$
    We assume that $\varphi$ is semi-exhaustive, i.e. there exists $R=R(\varphi)>0$ such that $B_\varphi(R)$ is relatively compact in $\Omega$. If $S$ is a positive (or negative) current of  bidimension $(p,p)$ on the set $\Omega$ then we denote $\PSH(S,\Omega)$ the set of non-negative semi-exhaustive plurisubharmonic functions  $\varphi$ such that $\log\varphi$ is also plurisubharmonic on $\Omega$  and the exterior product $S\w(dd^c\varphi)^p$ is well defined. Finally, we denote by $\mathcal I_S(\varphi):=\{k>0;\ \varphi^k\in \PSH(S,\Omega)\}$ for every $\varphi\in \PSH(S,\Omega)$; in particular, if $\varphi$ is $\mathcal C^2$ then $\mathcal I_S(\varphi)\supset[1,+\infty[$ for every current $S$.\\
    The Lelong-Demailly number of $S$ relatively to the weight $\varphi$ is $\nu(S,\varphi):= \lim_{r\to0^+} \nu(S,\varphi,r)$ where $\nu(S,\varphi,.)$ is the function defined on $]0,R(\varphi)[$ by
    $$\nu(S,\varphi,r):=\frac1{r^p}\int_{B_\varphi(r)}S\w\beta_\varphi^p.$$
    The classical Lelong number is given by the choice $\varphi(z)=\varphi_0(z):=|z|^2$.
    \begin{exple}\label{exple1}
        Let $S_\epsilon(z_1,z_2)=(|z_2|^{2\epsilon}-1)[z_1=0]$ where $\epsilon>0$. Then $S_\epsilon$ is a negative plurisubharmonic current of  bidimension $(1,1)$ on the unit ball $\mathbb B$ of $\C^2$, $\varphi_0\in \PSH(S_\epsilon,\mathbb B)$ and $\mathcal I_{S_\epsilon}(\varphi_0)=\mathcal I_{dd^cS_\epsilon}(\varphi_0)={}]0,+\infty[.$
    \end{exple}
    \begin{proof}
        A simple computation  proves that $$S_\epsilon\w dd^c(\varphi_0^k)=\frac{k^2}\pi (|z_2|^{2\epsilon}-1)|z_2|^{2(k-1)}idz_2\w d\overline{z}_2\w[z_1=0]$$ and $dd^cS_\epsilon=\frac{\epsilon^2}\pi|z_2|^{2(\epsilon-1)}idz_2\w d\overline{z}_2\w[z_1=0]$, hence $S_\epsilon\w dd^c(\varphi_0^k)$ is well  defined for all $k>0$. Therefore, $\mathcal I_{S_\epsilon}(\varphi_0)=\mathcal I_{dd^cS_\epsilon}(\varphi_0)={}]0,+\infty[$. One can check easily that
        $$\nu(S_\epsilon,\varphi_0^k,r)=2k^2\left(\frac{r^{\epsilon/k}}{\epsilon+k}-\frac1k\right)\quad \hbox{and}\quad \nu(dd^cS_\epsilon,\varphi_0,r)=2\epsilon r^{\epsilon},$$
        hence $\nu(S_\epsilon,\varphi_0^k)=-2k$.
    \end{proof}
\section{Main result}
    In the following, we will use a Lelong-Jensen formula proved by  Demailly \cite{De}. In \cite{To}, Toujani used an analogue of this formula to prove the existence of the directional Lelong-Demailly numbers of positive plurisubharmonic currents.
    \begin{lem}\label{lem1}(See \cite{De} or \cite{To})
        Let $S$ be a positive or negative plurisubharmonic current of bidimension $(p,p)$ on $\Omega$ and $\varphi\in \PSH(S,\Omega)$. Then, for all $0<r_1<r_2<R(\varphi)$,
        \begin{equation}\label{eq 1.1}
            \begin{array}{lcl}
                \nu(S,\varphi,r_2)-\nu(S,\varphi,r_1) & = &\ds \frac1{r_2^p}\int_{B_\varphi(r_2)} S\w\beta_\varphi^p -\frac1{r_1^p} \int_{B_\varphi(r_1)}S\w\beta_\varphi^p\\
                & = & \ds \int_{B_\varphi(r_1,r_2)}S\w\alpha_\varphi^p\\
                & & \ds +\int_{r_1}^{r_2}\left(\frac1{t^p} -\frac1{r_2^p} \right)dt\int_{B_\varphi(t)}dd^cS\w\beta_\varphi^{p-1}\\
                & &\ds +\left(\frac1{r_1^p}-\frac1{r_2^p}\right)\int_0^{r_1}dt\int_{B_\varphi(t)} dd^cS\w \beta_\varphi^{p-1}.
            \end{array}
        \end{equation}
    \end{lem}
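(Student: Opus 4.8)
The first equality is nothing but the definition of $\nu(S,\varphi,\cdot)$, so the whole content is the second one; the plan is to establish a single-radius identity and then subtract. Put $\psi:=\log\varphi$ and $I(r):=\int_{B_\varphi(r)}S\w\beta_\varphi^p$, and record two algebraic facts on $\{\varphi>0\}$: from $\beta_\varphi=dd^ce^{\psi}=\varphi(\alpha_\varphi+d\psi\w d^c\psi)$ together with $(d\psi\w d^c\psi)^2=0$ one gets
\[
\frac1{\varphi^{p}}\beta_\varphi^{p}=\alpha_\varphi^{p}+p\,\alpha_\varphi^{p-1}\w d\psi\w d^c\psi,\qquad d^c\psi\w\alpha_\varphi^{p-1}=\frac1{\varphi^{p}}\,d^c\varphi\w\beta_\varphi^{p-1},
\]
the second because every $d\varphi\w d^c\varphi$ contribution is annihilated by the extra factor $d^c\varphi$. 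Since $\alpha_\varphi$ is closed one has $\alpha_\varphi^{p}=d(d^c\psi\w\alpha_\varphi^{p-1})$, so integrating $S\w\alpha_\varphi^p$ over $B_\varphi(r)$ and applying Stokes twice (the second time using $dd^c\varphi=\beta_\varphi$ to handle the boundary slice) gives the identity
\[
\nu(S,\varphi,r)=\int_{B_\varphi(r)}S\w\alpha_\varphi^{p}+K(r),\qquad K(r):=\int_{B_\varphi(r)}\Big(\frac1{\varphi^{p}}-\frac1{r^{p}}\Big)\,dS\w d^c\varphi\w\beta_\varphi^{p-1}.
\]

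Subtracting this identity at $r_1$ and $r_2$, the $\alpha_\varphi^p$-terms combine into $\int_{B_\varphi(r_1,r_2)}S\w\alpha_\varphi^p$, while splitting the domains of the two $K$-integrals into the annulus $B_\varphi(r_1,r_2)$ and the inner ball $B_\varphi(r_1)$ yields
\[
K(r_2)-K(r_1)=\int_{B_\varphi(r_1,r_2)}\Big(\frac1{\varphi^{p}}-\frac1{r_2^{p}}\Big)dS\w d^c\varphi\w\beta_\varphi^{p-1}+\Big(\frac1{r_1^{p}}-\frac1{r_2^{p}}\Big)\Lambda(r_1),
\]
where $\Lambda(t):=\int_{B_\varphi(t)}dS\w d^c\varphi\w\beta_\varphi^{p-1}$. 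Everything then reduces to identifying $\Lambda$.

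The decisive step is the lemma $\Lambda(t)=\int_0^t J(s)\,ds$, where $J(s):=\int_{B_\varphi(s)}dd^cS\w\beta_\varphi^{p-1}$. I would prove it by first writing, via a Stieltjes integration in the variable $\varphi$, $\int_0^t J(s)\,ds=\int_{B_\varphi(t)}(t-\varphi)\,dd^cS\w\beta_\varphi^{p-1}$; since the weight $t-\varphi$ vanishes on $\partial B_\varphi(t)=\{\varphi=t\}$, one integration by parts transfers the outer $d$ onto the weight with no boundary contribution and converts this into $\int_{B_\varphi(t)}d\varphi\w d^cS\w\beta_\varphi^{p-1}$, and a final symmetrization exchanges $d\varphi\w d^cS$ for $dS\w d^c\varphi$, producing $\Lambda(t)$. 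Feeding $\Lambda(r_1)=\int_0^{r_1}J$ and $\Lambda'(t)=J(t)$ into the previous display, and rewriting the annulus term as $\int_{B_\varphi(r_1,r_2)}(\varphi^{-p}-r_2^{-p})\,dS\w d^c\varphi\w\beta_\varphi^{p-1}=\int_{r_1}^{r_2}(t^{-p}-r_2^{-p})J(t)\,dt$, reproduces exactly the two weighted $dd^cS$-terms of (\ref{eq 1.1}). Positivity of $dd^cS$ (valid since $S$ is plurisubharmonic) makes $J\ge0$ increasing, whence $\int_0^{r_1}J(t)\,dt\le r_1J(r_1)<\infty$ and all integrals are finite.

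The computation above is rigorous for smooth data, and the genuine difficulty — which I expect to be the main obstacle — is twofold: reducing to that case, and the vanishing of the boundary terms in the symmetrization. For the first, I would regularize $\varphi$ by admissible smooth functions (both $\varphi$ and $\log\varphi$ plurisubharmonic) so that Sard's theorem makes almost every $r\in{}]0,R(\varphi)[$ a regular value and Stokes applies on $\{\varphi<r\}$, regularizing $S$ by convolution while preserving plurisubharmonicity if needed, and then pass to the limit; here the hypothesis $\varphi\in\PSH(S,\Omega)$ is precisely what secures the well-definedness and weak continuity of the products $S\w\beta_\varphi^p$, $S\w\alpha_\varphi^p$, $dd^cS\w\beta_\varphi^{p-1}$ and the convergence of their masses on sublevel sets. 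For the second, the exchange $d\varphi\w d^cS\leftrightarrow dS\w d^c\varphi$ rests on a $d^c$-type Stokes formula whose boundary integrals over $\{\varphi=t\}$ must be shown to cancel; controlling these slice terms uniformly near the singular set $\{\varphi=0\}$, where $\alpha_\varphi=dd^c\log\varphi$ blows up, is the crux, and is exactly where the argument of Demailly \cite{De} (and its variant in \cite{To}) does the real work.
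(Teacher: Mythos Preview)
The paper does not prove this lemma at all: it is stated with the parenthetical ``(See \cite{De} or \cite{To})'' and used as a black box. There is therefore nothing in the paper to compare your argument against; your sketch is essentially a reconstruction of Demailly's original proof in \cite{De}, which is exactly what the paper invokes.

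One structural comment on your write-up: the single-radius identity
\[
\nu(S,\varphi,r)=\int_{B_\varphi(r)}S\wedge\alpha_\varphi^{p}+K(r)
\]
is not directly meaningful as written, because $\alpha_\varphi=dd^c\log\varphi$ is singular on $\{\varphi=0\}\subset B_\varphi(r)$, and for a negative plurisubharmonic $S$ the mass of $S\wedge\alpha_\varphi^p$ near that set need not be finite (this is precisely why the paper later introduces condition $(C)$ and the trivial extension $\widetilde{S\wedge\alpha_\varphi^p}$). The clean route---and the one Demailly actually takes---is to apply Stokes directly on the annulus $B_\varphi(r_1,r_2)$, where $\log\varphi$ is smooth and bounded, so that the $\alpha_\varphi^p$-term appears from the outset only over $B_\varphi(r_1,r_2)$ and no extension across $\{\varphi=0\}$ is needed. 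Your subsequent subtraction recovers this, but phrasing the argument via an absolute single-radius formula and then cancelling the singular pieces is formally circular. Apart from this organizational point, your identification $\Lambda(t)=\int_0^t J(s)\,ds$ via the weight $(t-\varphi)$ and the Fubini/Stokes manipulation is the standard mechanism, and your flagged difficulties (regularization, boundary control in the $d\varphi\wedge d^cS\leftrightarrow dS\wedge d^c\varphi$ exchange) are indeed where the work in \cite{De} lies.
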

    According to Lemma \ref{lem1}, if $S$ is  positive plurisubharmonic then $\nu(S,\varphi,.)$ is a non-negative increasing function on $]0,R(\varphi)[$, so  $\nu(S,\varphi):=\lim_{r\to0^+} \nu(S,\varphi,r)$  exists.\\
    It is well known that if $S$ is a positive plurisubharmonic current, then for every $\varphi\in \PSH(S,\Omega)$ the function $t\longmapsto\frac{\nu(dd^cS,\varphi,t)}t$ is integrable in the neighborhood of 0. Throughout this note, for every negative plurisubharmonic  current $T$ on $\Omega$, we say that a function $\varphi\in \PSH(T,\Omega)$ satisfies condition $(C)$ if the function $t\longmapsto \frac{\nu(dd^cT,\varphi,t)}{t}$ is integrable on a neighborhood of 0. In particular, if $\varphi$ satisfies condition $(C)$, we must have $\nu(dd^cT,\varphi)=0$.\\
    Now we state the main result concerning the case of negative plurisubharmonic currents.
    \begin{theo}\label{theo1}
        Let $T$ be a negative plurisubharmonic current of  bidimension $(p,p)$ on $\Omega$ and $\varphi\in \PSH(T,\Omega)$ satisfying condition $(C)$. Then, the Lelong-Demailly number $\nu(T,\varphi)$ of the current $T$ relatively to $\varphi$ exists.
    \end{theo}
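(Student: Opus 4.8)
The plan is to apply the Lelong--Jensen formula of Lemma~\ref{lem1} to $S=T$, keeping $r_2=\rho$ fixed at a small value and letting $r_1=r$ decrease to $0$. Fix $\rho\in{}]0,R(\varphi)[$ small enough that $t\longmapsto \nu(dd^cT,\varphi,t)/t$ is integrable on ${}]0,\rho[$ (possible by $(C)$), write $g(r):=\nu(T,\varphi,r)$ and $F(t):=\int_{B_\varphi(t)}dd^cT\w\beta_\varphi^{p-1}\ \bigl(=t^{p-1}\,\nu(dd^cT,\varphi,t)\bigr)$. Then \eqref{eq 1.1} with $r_1=r$, $r_2=\rho$ reads $g(\rho)-g(r)=\int_{B_\varphi(r,\rho)}T\w\alpha_\varphi^p+B(r)+C(r)$, where $B(r):=\int_r^\rho\bigl(\tfrac1{t^p}-\tfrac1{\rho^p}\bigr)F(t)\,dt$ and $C(r):=\bigl(\tfrac1{r^p}-\tfrac1{\rho^p}\bigr)\int_0^rF(t)\,dt$. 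Since $T$ is negative plurisubharmonic and $\log\varphi$ is plurisubharmonic we have $-T\geq0$, $\beta_\varphi\geq0$, $\alpha_\varphi\geq0$, $dd^cT\geq0$; hence $F\geq0$, $B(r),C(r)\geq0$, the quantity $\psi(r):=\int_{B_\varphi(r,\rho)}(-T)\w\alpha_\varphi^p$ is $\geq0$ and finite (it is minus a term of \eqref{eq 1.1}), and — crucially — $g(r)\leq0$ for every $r$. Thus
\[
  g(r)=g(\rho)+\psi(r)-B(r)-C(r),\qquad 0<r<\rho .
\]

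Next I would let $r\to0^+$ in each term. As $r$ decreases the sets $B_\varphi(r,\rho)=\{r\leq\varphi<\rho\}$ increase to $\{0<\varphi<\rho\}$, so by monotone convergence $\psi(r)\uparrow M:=\int_{\{0<\varphi<\rho\}}(-T)\w\alpha_\varphi^p\in[0,+\infty]$; likewise $B(r)\uparrow\ell:=\int_0^\rho\bigl(\tfrac1{t^p}-\tfrac1{\rho^p}\bigr)F(t)\,dt$, and this time $\ell<+\infty$, because $\int_0^\rho\tfrac{F(t)}{t^p}\,dt=\int_0^\rho\tfrac{\nu(dd^cT,\varphi,t)}{t}\,dt<\infty$ is exactly condition $(C)$ while $F$ is bounded on ${}]0,\rho[$, being the mass of the positive measure $dd^cT\w\beta_\varphi^{p-1}$ over the relatively compact set $B_\varphi(\rho)$. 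Finally, $F$ non-decreasing gives $\int_0^rF\leq rF(r)$, hence $0\leq C(r)\leq F(r)/r^{p-1}=\nu(dd^cT,\varphi,r)$, which tends to $\nu(dd^cT,\varphi)=0$ (as recalled before the statement, $(C)$ forces this number to vanish); so $C(r)\to0$.

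The heart of the argument is to rule out $M=+\infty$, and this is the one place where negativity of $T$ is really used: from $g(r)\leq0$ and the displayed identity,
\[
  \psi(r)\ \leq\ -g(\rho)+B(r)+C(r)\qquad(0<r<\rho),
\]
and letting $r\to0^+$ gives $M\leq -g(\rho)+\ell<+\infty$. Once $M$ is finite, all three of $\psi(r),B(r),C(r)$ converge as $r\to0^+$, so $g(r)=g(\rho)+\psi(r)-B(r)-C(r)$ converges and $\nu(T,\varphi)=\lim_{r\to0^+}\nu(T,\varphi,r)=g(\rho)+M-\ell\in\mathbb R$ exists. I expect the only genuine obstacle to be this finiteness of $M$: it is not automatic but is forced by combining the sign condition $T\leq0$ with the integrability hypothesis $(C)$ (via the bound $\psi(r)\leq -g(\rho)+B(r)+C(r)$), everything else being bookkeeping with monotone convergence. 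One also has to make sure $(-T)\w\alpha_\varphi^p$ and $dd^cT\w\beta_\varphi^{p-1}$ are legitimately defined here, but this is already part of the hypothesis $\varphi\in\PSH(T,\Omega)$ and of the framework of Lemma~\ref{lem1}.
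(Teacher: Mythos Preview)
Your argument is correct and is essentially the paper's proof, only organized term-by-term instead of packaged into a single auxiliary function. The paper sets
\[
  f(r)\;:=\;\nu(T,\varphi,r)+\int_0^r\Bigl(\frac{t^p}{r^p}-1\Bigr)\frac{\nu(dd^cT,\varphi,t)}{t}\,dt,
\]
observes $f\leq0$ (by $T\leq0$) and, via Lemma~\ref{lem1}, computes $f(\rho)-f(r)=\int_{B_\varphi(r,\rho)}T\w\alpha_\varphi^p=-\psi(r)$, so $f$ is decreasing and bounded above by $0$; hence $\lim_{r\to0^+}f(r)$ exists, and the correction term tends to $0$ by~$(C)$. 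Your identity $g(r)=g(\rho)+\psi(r)-B(r)-C(r)$ is just the equation $f(r)=f(\rho)+\psi(r)$ unpacked, and your crucial step ``$g(r)\leq0\Rightarrow\psi(r)\leq -g(\rho)+B(r)+C(r)$'' is exactly the paper's ``$f\leq0\Rightarrow\psi(r)\leq -f(\rho)$'' written out. The monotone-convergence treatment of $\psi(r)$ and $B(r)$, and the estimate $0\leq C(r)\leq\nu(dd^cT,\varphi,r)\to0$, are all correct and match the paper's reasoning (the latter is the paper's ``$(t^p/r^p-1)$ uniformly bounded'' step made explicit).
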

    \begin{proof}
        For every $0<r<R(\varphi)$, we set
        $$f(r)=\frac1{r^p}\int_{B_\varphi(r)}T\w\beta_\varphi^p+\frac1{r^p}\int_0^r dt\int_{B_\varphi(t)}dd^cT\w\beta_\varphi^{p-1} -\int_0^r\frac{dt}{t^p}\int_{B_\varphi(t)}dd^cT\w\beta_\varphi^{p-1}.$$
        Thanks to condition $(C)$, the function $f$ is well defined and non-positive on $]0,R(\varphi)[$. Indeed,
        $$f(r) =\nu(T,\varphi,r)+\int_0^r\left(\frac{t^p}{r^p}-1\right)\frac{\nu(dd^cT,\varphi,t)}{t}dt\leq0$$
        because the  function $\nu(dd^cT,\varphi,.)$ is non-negative on $]0,R(\varphi)[$.\\
        For $0<r_1<r_2<R(\varphi)$ we set $A(r_1,r_2):=f(r_2)-f(r_1)$. The current $T$ is negative plurisubharmonic, thus lemma \ref{lem1} gives
        \begin{equation}\label{eq 1.2}
            \begin{array}{lcl}
                A(r_1,r_2) & = &\ds\nu(T,\varphi,r_2)- \nu(T,\varphi,r_1)+ \frac1{r_2^p}\int_0^{r_2}t^{p-1} \nu(dd^cT,\varphi,t)dt\\
                & & \ds-\frac1{r_1^p}\int_0^{r_1} t^{p-1}\nu(dd^cT,\varphi,t)dt -\int_{r_1}^{r_2}\frac{\nu(dd^cT,\varphi,t)}{t}dt\\
                &=&\ds \int_{B_\varphi(r_1,r_2)}T\w\alpha_\varphi^p\leq 0.
            \end{array}
        \end{equation}
        Therefore, $f$ is a non-positive decreasing function on $]0,R(\varphi)[$, and this implies the existence of the  limit $\varrho:=\lim_{r\to0^+} f(r)$. The hypothesis of integrability of $\nu(dd^cT,\varphi,t)/t$ and the fact that $(t^p/r^p-1)$ is uniformly  bounded give $$\ds\lim_{r\to0^+}\int_0^r\left(\frac{t^p}{r^p}-1\right)\frac{\nu(dd^cT,\varphi,t)}{t}dt=0.$$
        Therefore, $\varrho=\lim_{r\to0^+}f(r)=\lim_{r\to0^+}\nu(T,\varphi,r)=\nu(T,\varphi)$.
    \end{proof}
    \begin{rem}
        If $T$ is a positive (resp. negative) plurisubharmonic current of bidimension $(p,p)$ on $\Omega$ and $\varphi\in\PSH(T,\Omega)$ (resp. satisfying condition $(C)$) then the extension $\widetilde{T\w\alpha_\varphi^p}$ of $T\w\alpha_\varphi^p$ by 0 over the compact set $\{\varphi=0\}$ exists and we have
        $$\int_{B_\varphi(r)}\widetilde{T\w\alpha_\varphi^p}=f(r)-\nu(T,\varphi).$$
    \end{rem}
    Indeed, it suffice to use  the lemma \ref{lem1} to prove that $\int_{B_\varphi(\epsilon,r)}T\w\alpha_\varphi^p$ is uniformly bounded  and then tend $\epsilon$ to 0 to prove the equality of the remark.\\
    A natural question arises: does the existence of the Lelong-Demailly number of the negative plurisub\-harmonic current $T$ implies the existence of $\widetilde{T\w\alpha_\varphi^p}$? so condition $(C)$ will be necessary in Theorem \ref{theo1}.\\

    In the following proposition, we give some properties of Lelong-Demailly numbers in both cases of negative or positive plurisubharmonic currents.
    \begin{prop}\label{pro1}
        Let $T$ be a positive or negative plurisubharmonic current of bidimension $(p,p)$ on $\Omega$ and $\varphi\in \PSH(T,\Omega)$. Then, for every $k\in\mathcal I_T(\varphi)$ and every $r\in{}]0,R(\varphi)[$, we have 
        \begin{equation}\label{eq 1.3}
            \nu(T,\varphi^k,r^k)=k^p\left[\nu(T,\varphi,r)+\int_0^r\frac{\nu(dd^cT,\varphi,t)}t \left(\frac{t^p}{r^p} -\frac{t^{kp}}{r^{kp}}\right)dt\right].
        \end{equation}
        In particular, if $T$ is negative, then $\nu(T,\varphi)$ exists if and only if  for all $k\in\mathcal I_T(\varphi)$,  $\nu(T,\varphi^k)$ exists.\\
        Furthermore, in both cases $($with the assumption that  $\nu(T,\varphi)$ exists if $T$ is negative$)$ we have $$\nu(T,\varphi^k)=k^p\nu(T,\varphi).$$
    \end{prop}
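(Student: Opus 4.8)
The plan is to establish the scaling identity \eqref{eq 1.3} first, by a direct change-of-variables argument based on the Lelong–Jensen machinery, and then to read off all the remaining assertions as corollaries. The starting point is the observation that $B_{\varphi^k}(r^k) = \{\varphi^k < r^k\} = \{\varphi < r\} = B_\varphi(r)$, so the two defining integrals live on the same domain; the only thing that changes is the form $\beta_{\varphi^k}^p$ versus $\beta_\varphi^p$. Since $\log(\varphi^k) = k\log\varphi$ we get $\alpha_{\varphi^k} = k\,\alpha_\varphi$ on $\{\varphi>0\}$, hence $\alpha_{\varphi^k}^p = k^p\alpha_\varphi^p$; this is the source of the $k^p$ factor. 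To relate $\beta_{\varphi^k}$ to $\beta_\varphi$ and $\alpha_\varphi$, I would use $\beta_{\varphi^k} = dd^c(\varphi^k)$ and the identity $\varphi^k \alpha_{\varphi^k} = \beta_{\varphi^k} - k^2\varphi^{k-2}\, i\partial\varphi\wedge\bar\partial\varphi/\pi$ type relation; more efficiently, I would simply apply the Lelong–Jensen formula of Lemma \ref{lem1} to the weight $\psi := \varphi^k$, writing $\nu(T,\psi,s_2)-\nu(T,\psi,s_1)$ for $s_i = r_i^k$, and compare term by term with the corresponding formula for the weight $\varphi$ after substituting $t = \tau^k$ in the integrals against $dd^cT\wedge\beta_\varphi^{p-1}$. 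The Jacobian of this substitution, together with the identity $\nu(dd^cT,\varphi^k,\tau^k) = k^{p-1}\nu(dd^cT,\varphi,\tau)$ (which itself follows from the same $B_{\varphi^k}(\tau^k)=B_\varphi(\tau)$ remark applied to the current $dd^cT$ of bidimension $(p-1,p-1)$), should produce exactly the bracket in \eqref{eq 1.3}.

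Alternatively — and this is probably cleaner to write — I would introduce $g(r) := \nu(T,\varphi^k,r^k)$ and show that $g$ and the right-hand side $h(r)$ of \eqref{eq 1.3} have the same derivative and the same limit at a common point; by Lemma \ref{lem1} applied with weight $\varphi^k$ one has an explicit expression for $g(r_2)-g(r_1)$ as $\int_{B_{\varphi^k}(r_1^k,r_2^k)} T\wedge\alpha_{\varphi^k}^p$ plus correction terms, and $\alpha_{\varphi^k}^p = k^p\alpha_\varphi^p$ converts the leading term into $k^p\int_{B_\varphi(r_1,r_2)}T\wedge\alpha_\varphi^p$, which is exactly $k^p$ times the leading term of the variation of $h$ (again by Lemma \ref{lem1} for $\varphi$). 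Matching the correction terms is then the bookkeeping step: it amounts to the elementary computation that $\frac{1}{r^p}\int_0^r t^{p-1}\nu(dd^cT,\varphi,t)\,dt$ and $\int_0^r \frac{\nu(dd^cT,\varphi,t)}{t}\,dt$ reassemble, after the $k$-rescaling, into $\int_0^r\frac{\nu(dd^cT,\varphi,t)}{t}\big(\frac{t^p}{r^p}-\frac{t^{kp}}{r^{kp}}\big)dt$.

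For the ``in particular'' clause: when $T$ is negative and satisfies condition $(C)$, Theorem \ref{theo1} already gives that $\nu(T,\varphi)$ exists, and condition $(C)$ forces $\nu(dd^cT,\varphi)=0$ with $t\mapsto\nu(dd^cT,\varphi,t)/t$ integrable near $0$; since $k\in\mathcal I_T(\varphi)$ means precisely $\varphi^k\in\PSH(T,\Omega)$, and one checks $\varphi^k$ also satisfies condition $(C)$ (the measure $\nu(dd^cT,\varphi^k,t)/t\,dt$ pulls back to a constant multiple of $\nu(dd^cT,\varphi,\tau)/\tau\,d\tau$ under $t=\tau^k$), Theorem \ref{theo1} applies to $\varphi^k$ as well, giving the existence of $\nu(T,\varphi^k)$. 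The converse direction is immediate since $1\in\mathcal I_T(\varphi)$. Finally, letting $r\to 0^+$ in \eqref{eq 1.3}: the leading term tends to $k^p\nu(T,\varphi)$, and the integral term tends to $0$ because $\big|\frac{t^p}{r^p}-\frac{t^{kp}}{r^{kp}}\big|$ is uniformly bounded for $0<t<r$ (by $1$ if $k\geq 1$, and by an analogous constant if $0<k<1$) while $\nu(dd^cT,\varphi,t)/t$ is integrable near $0$ — exactly the dominated-convergence argument already used in the proof of Theorem \ref{theo1}. This yields $\nu(T,\varphi^k)=k^p\nu(T,\varphi)$ in both the positive and the negative case.

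The main obstacle I anticipate is not conceptual but organizational: getting the correction terms in the Lelong–Jensen formula for $\varphi^k$ to align, under the substitution $t=\tau^k$, with the specific combination $\frac{t^p}{r^p}-\frac{t^{kp}}{r^{kp}}$ appearing in \eqref{eq 1.3}, and making sure the identity $\nu(dd^cT,\varphi^k,\tau^k)=k^{p-1}\nu(dd^cT,\varphi,\tau)$ is justified (it requires that $\beta_{\varphi^k}^{p-1}$ restricted to the relevant computation contributes the right power of $k$, which is where one must be slightly careful since $\beta_{\varphi^k}\neq k\beta_\varphi$ pointwise, unlike the $\alpha$'s). I expect that the cleanest route around this is to avoid $\beta_{\varphi^k}$ altogether and work only with $\alpha_{\varphi^k}=k\alpha_\varphi$ plus the abstract relation $dd^c(\nu(\cdot,\varphi^k,\cdot))$ supplied by Lemma \ref{lem1}, treating the whole identity as an equality of absolutely continuous functions of $r$ determined by their derivatives.
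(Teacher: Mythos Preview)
Your plan is essentially the paper's own argument: apply Lemma~\ref{lem1} to the weight $\psi=\varphi^k$, use $\alpha_{\varphi^k}^p=k^p\alpha_\varphi^p$, substitute $t=s^k$ in the $dd^cT$--integrals, and invoke the Demailly identity $\nu(dd^cT,\varphi^k,s^k)=k^{p-1}\nu(dd^cT,\varphi,s)$ for the closed positive current $dd^cT$. Two points where the paper differs from your write-up are worth noting.

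First, the ``organizational obstacle'' you anticipate is resolved in the paper by a regularization: one replaces $\varphi$ by $\varphi_\epsilon=\varphi+\epsilon$ (and $\varphi^k$ by $(\varphi+\epsilon)^k$). Since $B_{\varphi_\epsilon}(r_1)=\varnothing$ for $r_1<\epsilon$, the lower-endpoint terms in Lemma~\ref{lem1} vanish identically, and one obtains clean one-sided formulas for $\nu(T,\varphi_\epsilon,r)$ and $\nu(T,\psi_\epsilon,r^k)$ with all integrals running from $\epsilon$; the bookkeeping then reduces to the chain of equalities displayed as \eqref{eq 1.5}, after which one lets $\epsilon\to0$. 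This avoids having to match two-endpoint variations or to identify a common ``base point'' for $g$ and $h$.

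Second, your argument for the ``in particular'' clause and for $\nu(T,\varphi^k)=k^p\nu(T,\varphi)$ leans on condition~$(C)$ (integrability of $\nu(dd^cT,\varphi,t)/t$), which is \emph{not} assumed in Proposition~\ref{pro1}. The paper instead bounds the integral term in \eqref{eq 1.3} pointwise: using that $\nu(dd^cT,\varphi,\cdot)$ is non-negative and increasing, one gets
\[
\left|\int_0^r\nu(dd^cT,\varphi,t)\Big(\tfrac{t^{p-1}}{r^p}-\tfrac{t^{kp-1}}{r^{kp}}\Big)\,dt\right|\le \tfrac{|k-1|}{kp}\,\nu(dd^cT,\varphi,r),
\]
splitting into the cases $k\ge1$ and $k<1$. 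This bound, together with $\nu(dd^cT,\varphi)=0$, gives both the equivalence of existence and the equality $\nu(T,\varphi^k)=k^p\nu(T,\varphi)$ without invoking Theorem~\ref{theo1} or dominated convergence.
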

    The previous equality is due to Demailly in the case of closed positive currents.
    \begin{proof}
        Let $\epsilon>0$ be sufficiently small. By replacing $\varphi$ with $\varphi_\epsilon=\varphi+\epsilon$ and $\psi:=\varphi^k$ with $\psi_\epsilon=\varphi_\epsilon^k$, when $T$ is negative (respectively positive), Equality (\ref{eq 1.2}) (resp.\ Equality (\ref{eq 1.1})) gives for $0<r_1< \epsilon<r<R(\varphi)$
        \begin{equation}\label{eq 1.4}
            \begin{array}{lcl}
              \ds\nu(T,\varphi_\epsilon,r)  & = & \ds \int_{B_{\varphi_\epsilon}(\epsilon,r)}T\w\alpha_{\varphi_\epsilon}^p -\frac1{r^p}\int_\epsilon^rt^{p-1}\nu(dd^cT,\varphi_\epsilon,t)dt  \\
              & &\ds \hfill +\int_\epsilon^r\frac{\nu(dd^cT,\varphi_\epsilon,t)}t dt
            \end{array}
        \end{equation}
        and
        \begin{equation}\label{eq 1.5}
        \begin{array}{lcl}
            \nu(T,\psi_\epsilon,r^k) & =& \ds \int_{B_{\psi_\epsilon}(\epsilon^k,r^k)}T\w\alpha_{\psi_\epsilon}^p -\frac1{r^{kp}}\int_{\epsilon^k}^{r^k}t^{p-1}\nu(dd^cT,\psi_\epsilon,t)dt\\ 
            & &\ds\hfill+\int_{\epsilon^k}^{r^k}\frac{\nu(dd^cT,\psi_\epsilon,t)}t dt\\
            & = & \ds \int_{B_{\psi_\epsilon}(\epsilon^k,r^k)}T\w\alpha_{\psi_\epsilon}^p - \frac k{r^{kp}}\int_\epsilon^r s^{kp-1}\nu(dd^cT,\psi_\epsilon,s^k)ds\\ 
            & &\ds\hfill+k\int_\epsilon^r\frac{\nu(dd^cT,\psi_\epsilon,s^k)}s ds\\
            & =& \ds k^p\int_{B_{\varphi_\epsilon}(\epsilon,r)}T\w\alpha_{\varphi_\epsilon}^p -\frac{k^p}{r^{kp}}\int_\epsilon^r s^{kp-1}\nu(dd^cT,\varphi_\epsilon,s)ds \\ 
            & &\ds\hfill +k^p\int_\epsilon^r\frac{\nu(dd^cT,\varphi_\epsilon,s)}s ds\\
            & = &\ds k^p\left(\nu(T,\varphi_\epsilon,r)+\frac1{r^p}\int_\epsilon^rt^{p-1} \nu(dd^cT,\varphi_\epsilon,t)dt \right)\\
            & & \hfill\ds -\frac{k^p}{r^{kp}}\int_\epsilon^r s^{kp-1}\nu(dd^cT,\varphi_\epsilon,s)ds\\
            & = &\ds k^p\left(\nu(T,\varphi_\epsilon,r)+\int_\epsilon^r \frac{\nu(dd^cT,\varphi_\epsilon,t)}t \left(\frac{t^p}{r^p} -\frac{t^{kp}}{r^{kp}}\right)dt \right).
          \end{array}
        \end{equation}
        Here we have used successively Equality (\ref{eq 1.4}) with $\psi_\epsilon$ instead of $\varphi_\epsilon$ in the first equality, then the change of variable $t=s^k$, next the fact that $\nu(dd^cT,\psi_\epsilon,s^k)=k^{p-1}\nu(dd^cT,\varphi_\epsilon,s)$ (equality proved  by Demailly in the case of closed positive currents), and finally Equality (\ref{eq 1.4}).\\
        When $\epsilon\to0$, Equality (\ref{eq 1.5}) implies Equality (\ref{eq 1.3}).\\
        Thanks to Equality (\ref{eq 1.3}), for every $r\in{}]0,R(\varphi)[$, we have
        $$\nu(T,\varphi,r)-\frac1{k^p}\nu(T,\varphi^k,r^k)=-\int_0^r\nu(dd^cT,\varphi,t) \left(\frac{t^{p-1}}{r^p} -\frac{t^{kp-1}}{r^{kp}}\right)dt.$$
        The current $dd^cT$ is positive and closed, hence $\nu(dd^cT,\varphi,.)$ is a non-negative increasing function and $\nu(dd^cT,\varphi)=0$. We consider two disjoint cases:
        \begin{itemize}
          \item \textit{First case $k\geq 1$.} We have
            $$\begin{array}{lcl}
                0 & \leq &\ds \int_0^r\nu(dd^cT,\varphi,t) \left(\frac{t^{p-1}}{r^p} -\frac{t^{kp-1}}{r^{kp}}\right)dt\\
                & \leq  &\ds  \nu(dd^cT,\varphi,r)\int_0^r \left(\frac{t^{p-1}}{r^p} -\frac{t^{kp-1}}{r^{kp}}\right)dt\\
                & = &\ds \nu(dd^cT,\varphi,r)\frac{k-1}{kp}.
              \end{array}$$
             So
             $$-\frac{k-1}{kp}\nu(dd^cT,\varphi,r)\leq\nu(T,\varphi,r)-\frac1{k^p}\nu(T,\varphi^k,r^k)\leq 0$$
          \item \textit{Second case $k< 1$.} A similar calculation shows that we have
            $$0\leq\nu(T,\varphi,r)-\frac1{k^p}\nu(T,\varphi^k,r^k)\leq -\frac{k-1}{kp}\nu(dd^cT,\varphi,r).$$
        \end{itemize}
        In the two cases, both terms (of the right-hand and the left-hand) have the same limit 0 when $r\to0^+$. This completes the proof of the proposition.
    \end{proof}
\section{Applications}
    The following corollaries are immediate consequences of Theorem \ref{theo1} and/or Proposition \ref{pro1}.
    \begin{cor}
        Let $T$ be a negative plurisubharmonic current of bidimension $(p,p)$ on $\Omega$ and $\varphi\in \PSH(T,\Omega)$. If there  exists $k\in\mathcal I_T(\varphi)$ such that the  function $t\longmapsto \frac{\nu(dd^cT,\varphi,t^{\frac1k})}{t}$ is integrable on a neighborhood of $0$, then the Lelong-Demailly number $\nu(T,\varphi)$ of $T$ relatively  to $\varphi$ exists.
    \end{cor}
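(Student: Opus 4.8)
The plan is to deduce the corollary from Theorem \ref{theo1} applied not to $\varphi$ itself but to the rescaled weight $\psi:=\varphi^{k}$, and then to transfer the conclusion back to $\varphi$ using Proposition \ref{pro1}. Since $k\in\mathcal I_T(\varphi)$ we have $\psi\in\PSH(T,\Omega)$ by definition, so the only thing to verify before invoking Theorem \ref{theo1} for $\psi$ is that $\psi$ satisfies condition $(C)$, that is, that $s\longmapsto\nu(dd^cT,\psi,s)/s$ is integrable near $0$.

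The key observation is that the integrability hypothesis in the statement is \emph{precisely} condition $(C)$ for $\varphi^{k}$, once one uses the scaling law for the positive closed current $dd^cT$. Concretely, I would recall from the proof of Proposition \ref{pro1} the identity $\nu(dd^cT,\psi_\epsilon,s^{k})=k^{p-1}\,\nu(dd^cT,\varphi_\epsilon,s)$ (Demailly's scaling for closed positive currents, legitimate here because $dd^cT$ is positive and closed), pass to the limit $\epsilon\to0$ to obtain
$$\nu(dd^cT,\varphi^{k},s^{k})=k^{p-1}\,\nu(dd^cT,\varphi,s),\qquad 0<s<R(\varphi),$$
and then substitute $u=s^{k}$, which gives $\nu(dd^cT,\varphi^{k},u)=k^{p-1}\,\nu(dd^cT,\varphi,u^{1/k})$ and hence
$$\int_{0}^{\delta}\frac{\nu(dd^cT,\varphi^{k},u)}{u}\,du=k^{p-1}\int_{0}^{\delta}\frac{\nu(dd^cT,\varphi,u^{1/k})}{u}\,du<+\infty$$
for $\delta$ small, by assumption. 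Thus $\varphi^{k}$ satisfies condition $(C)$, and Theorem \ref{theo1} provides the existence of $\nu(T,\varphi^{k})$.

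It remains to pass from the existence of $\nu(T,\varphi^{k})$ to that of $\nu(T,\varphi)$, and for this I would reuse the estimate already produced inside the proof of Proposition \ref{pro1}: combining Equality (\ref{eq 1.3}) with the monotonicity of $\nu(dd^cT,\varphi,\cdot)$ (a non-negative increasing function satisfying $\nu(dd^cT,\varphi)=0$, since $dd^cT$ is positive and closed) one gets, for every $r\in{}]0,R(\varphi)[$,
$$\Bigl|\,\nu(T,\varphi,r)-\tfrac1{k^{p}}\,\nu(T,\varphi^{k},r^{k})\,\Bigr|\;\leq\;\frac{|k-1|}{kp}\,\nu(dd^cT,\varphi,r),$$
and the right-hand side tends to $0$ as $r\to0^{+}$. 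Since $r\to0^{+}$ forces $r^{k}\to0^{+}$, the quantity $k^{-p}\,\nu(T,\varphi^{k},r^{k})$ converges to $k^{-p}\,\nu(T,\varphi^{k})$, whence $\nu(T,\varphi)=\lim_{r\to0^{+}}\nu(T,\varphi,r)$ exists and equals $k^{-p}\,\nu(T,\varphi^{k})$. The only point requiring genuine care — and the main potential obstacle — is the first reduction: one has to make sure that $\varphi^{k}$ is admissible for the positive closed current $dd^cT$, so that $\nu(dd^cT,\varphi^{k},\cdot)$ is well defined and Demailly's scaling applies, and that both the passage $\epsilon\to0$ and the substitution $u=s^{k}$ are justified; but these are exactly the manipulations already carried out in the computation (\ref{eq 1.5}) of Proposition \ref{pro1}, so no new idea is needed beyond that same bookkeeping.
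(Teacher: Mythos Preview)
Your argument is correct and coincides with the paper's ``first way'': verify condition $(C)$ for $\varphi^{k}$ via Demailly's scaling identity $\nu(dd^cT,\varphi^{k},t)=k^{p-1}\nu(dd^cT,\varphi,t^{1/k})$, apply Theorem \ref{theo1} to obtain $\nu(T,\varphi^{k})$, then invoke Proposition \ref{pro1} to transfer existence back to $\nu(T,\varphi)$. The paper also records a shorter ``second way'': the substitution $t=s^{k}$ shows that the integrability hypothesis is \emph{equivalent} to condition $(C)$ for $\varphi$ itself (since $\int_0^{r_0}\nu(dd^cT,\varphi,t^{1/k})\,t^{-1}\,dt=k\int_0^{r_0^{1/k}}\nu(dd^cT,\varphi,s)\,s^{-1}\,ds$), so Theorem \ref{theo1} applies directly to $\varphi$ without any detour through $\varphi^{k}$ or Proposition \ref{pro1}.
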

    \begin{proof}
        We can prove this corollary in two different ways:
        \begin{itemize}
          \item \textit{First way}. Thanks to Proposition \ref{pro1}, to prove that $\nu(T,\varphi)$ exists, it suffices to show that $\nu(T,\varphi^k)$ exists; for this we observe that $$\frac{\nu(dd^cT,\varphi^k,t)}t=\frac{\nu(dd^cT,\varphi^k,(t^{\frac1k})^k)}t =k^{p-1}\frac{\nu(dd^cT,\varphi,t^{\frac1k})}t$$ is integrable on a neighborhood of 0 (hypothesis). Hence, thanks to Theorem  \ref{theo1}, $\nu(T,\varphi^k)$ exists.
          \item \textit{Second way}. We remark that the integrability of $\frac{\nu(dd^cT,\varphi,t^{\frac1k})}t$ is equivalent to the integrability of $\frac{\nu(dd^cT,\varphi,t)}t$. Hence, thanks to Theorem \ref{theo1}, $\nu(T,\varphi)$ exists. In fact if we take $t=s^k$ we obtain
              $$\int_0^{r_0}\frac{\nu(dd^cT,\varphi,t^{\frac1k})}t dt=k\int_0^{r_0^k}\frac{\nu(dd^cT,\varphi,s)}s ds.$$
        \end{itemize}
    \end{proof}
    \begin{cor}
        Let $T$ be a negative plurisubharmonic current of bidimension $(p,p)$ on $\Omega$ and $\varphi\in \PSH(T,\Omega)$ such that $\mathcal I_T(\varphi)$ is non bounded $($this is always the case if $\varphi$ is $\mathcal C^2)$. Then for every $r\in{}]0,R(\varphi)[$ one has
        $$\nu(T,\varphi,r)\leq -\int_0^r\nu(dd^cT,\varphi,t) \frac{t^{p-1}}{r^p}dt.$$
        In particular,
        $$\nu(T,\varphi,r)\leq -\frac{1-s^{-p}}p\nu(dd^cT,s\varphi,r)\quad\forall s\geq1.$$
    \end{cor}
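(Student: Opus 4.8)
The plan is to extract the first inequality by letting $k\to+\infty$ in Equality~(\ref{eq 1.3}) of Proposition~\ref{pro1}, and then to deduce the second (``in particular'') inequality from it by a crude estimate of the resulting integral, using the monotonicity of $\nu(dd^cT,\varphi,\cdot)$ together with the elementary scaling identity $\nu(dd^cT,s\varphi,r)=\nu(dd^cT,\varphi,r/s)$.

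First I would fix $r\in{}]0,R(\varphi)[$ and, since $\mathcal I_T(\varphi)$ is unbounded, apply Equality~(\ref{eq 1.3}) for arbitrarily large $k\in\mathcal I_T(\varphi)$. For each such $k$ the weight $\varphi^k$ is plurisubharmonic, hence $\beta_{\varphi^k}^p$ is a positive current; as $T$ is negative plurisubharmonic this forces $\nu(T,\varphi^k,r^k)=\frac1{r^{kp}}\int_{B_{\varphi^k}(r^k)}T\wedge\beta_{\varphi^k}^p\leq0$. Plugging this into Equality~(\ref{eq 1.3}) then gives, for every large $k\in\mathcal I_T(\varphi)$,
$$\nu(T,\varphi,r)\leq-\int_0^r\frac{\nu(dd^cT,\varphi,t)}{t}\left(\frac{t^p}{r^p}-\frac{t^{kp}}{r^{kp}}\right)dt.$$
For $0<t<r$ the bracket is non-negative and increases with $k$ to $t^p/r^p$, so I would invoke the monotone convergence theorem: the right-hand side decreases, as $k\to+\infty$, to $-\int_0^r\nu(dd^cT,\varphi,t)\,t^{p-1}r^{-p}\,dt$, a finite quantity since $\nu(dd^cT,\varphi,\cdot)$ is non-negative and increasing, hence bounded on $]0,r]$ by $\nu(dd^cT,\varphi,r)$, and $t^{p-1}$ is integrable near $0$. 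This yields the first inequality.

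For the second inequality, given $s\geq1$, I would restrict the integral to $[r/s,r]\subset{}]0,r[$, on which $\nu(dd^cT,\varphi,t)\geq\nu(dd^cT,\varphi,r/s)$ by monotonicity, and compute $\int_{r/s}^r t^{p-1}r^{-p}\,dt=\frac{1-s^{-p}}{p}$, obtaining
$$\int_0^r\nu(dd^cT,\varphi,t)\,\frac{t^{p-1}}{r^p}\,dt\geq\frac{1-s^{-p}}{p}\,\nu(dd^cT,\varphi,r/s).$$
Finally, since $dd^c(s\varphi)=s\,dd^c\varphi$, the definition of $\nu$ gives $\nu(dd^cT,\varphi,r/s)=\nu(dd^cT,s\varphi,r)$, and combining with the first inequality produces the claimed bound $\nu(T,\varphi,r)\leq-\frac{1-s^{-p}}{p}\,\nu(dd^cT,s\varphi,r)$.

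The manipulations are all routine; I expect the passage to the limit $k\to+\infty$ to be the main — though mild — obstacle, since one must check carefully that the term $t^{kp}/r^{kp}$ really disappears in the limit and that the limiting integral stays finite. The only other point requiring attention is the sign $\nu(T,\varphi^k,r^k)\leq0$, which hinges on $\varphi^k$ being plurisubharmonic — precisely what $k\in\mathcal I_T(\varphi)$ guarantees.
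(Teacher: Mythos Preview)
Your argument is correct and follows essentially the same route as the paper: the paper sets $u(k):=k^{-p}\nu(T,\varphi^k,r^k)$, observes that $u$ is non-positive and increasing in $k$ by Equality~(\ref{eq 1.3}), and concludes that its limit $\nu(T,\varphi,r)+\int_0^r\nu(dd^cT,\varphi,t)\,t^{p-1}r^{-p}\,dt$ is $\leq0$; the second inequality is then obtained exactly as you do, by restricting the integral to $[r/s,r]$ and using the monotonicity of $\nu(dd^cT,\varphi,\cdot)$. Your explicit mention of the scaling identity $\nu(dd^cT,s\varphi,r)=\nu(dd^cT,\varphi,r/s)$ is a detail the paper leaves implicit.
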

    \begin{proof}
        Thanks to Proposition \ref{pro1}, for every $k\in\mathcal I_T(\varphi)$, we have
        $$u(k):=\frac1{k^p}\nu(T,\varphi^k,r^k)=\nu(T,\varphi,r)+\int_0^r\frac{\nu(dd^cT,\varphi,t)}t \left(\frac{t^p}{r^p} -\frac{t^{kp}}{r^{kp}}\right)dt.$$
        The function $u$ is non-positive and increasing on $\mathcal I_T(\varphi)$, so
        $$\lim_{k\to+\infty, k\in\mathcal I_T(\varphi)}u(k)=\nu(T,\varphi,r)+\int_0^r\nu(dd^cT,\varphi,t) \frac{t^{p-1}}{r^p}dt\leq 0.$$
        If $s=1$ the inequality is clear. For $s>1$ we have
        $$\begin{array}{lcl}
            \ds\nu(T,\varphi,r)\leq -\int_0^r\nu(dd^cT,\varphi,t) \frac{t^{p-1}}{r^p}dt 
            & \leq & \ds-\int_{r/s}^r\nu(dd^cT,\varphi,t) \frac{t^{p-1}}{r^p}dt \\
            & \leq &\ds -\frac{1-s^{-p}}p\nu(dd^cT,\varphi,\frac rs). 
          \end{array}
          $$
    \end{proof}
    \begin{theo}\label{theo2}
        Let $T$ be a positive or negative plurisubharmonic current of bidimension $(p,p)$ on $\Omega$ and $\varphi,\ \psi\in \PSH(T,\Omega)$ such that
        $$\liminf_{\varphi(z)\to0}\frac{\log \psi(z)}{\log \varphi(z)}\geq \ell$$
        where $\mathcal I_T(\varphi)$ contains a neighborhood $\mathscr V(\ell)$ of $\ell$. One has
        \begin{itemize}
          \item $\nu(T,\psi)\geq\ell^p\nu(T,\varphi)$ if $T$ is positive.
          \item $\nu(T,\psi)\leq\ell^p\nu(T,\varphi)$ if $T$ is negative $($with the assumption that $\psi$ satisfies condition $(C))$.
        \end{itemize}
        In particular,if $\log \psi(z)\sim\ell\log \varphi(z)$ for $\varphi(z)\in\mathscr V(0)$ then $\nu(T,\psi)=\ell^p\nu(T,\varphi).$
    \end{theo}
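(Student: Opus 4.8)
The plan is to reproduce Demailly's comparison argument in the present plurisubharmonic (non-closed) setting, using the Lelong--Jensen formula of Lemma~\ref{lem1} and, when $T\le0$, condition $(C)$, the exponent $\ell^p$ being supplied by Proposition~\ref{pro1}. To start, fix $\ell'\in\mathscr V(\ell)\cap\mathcal I_T(\varphi)$ with $0<\ell'<\ell$ (the case $\ell=0$ is trivial). From $\liminf_{\varphi(z)\to0}\frac{\log\psi(z)}{\log\varphi(z)}\ge\ell>\ell'$ there is $r_0>0$, $r_0<\min(1,R(\varphi))$, such that on $\{0<\varphi<r_0\}$ one has $\log\varphi<0$ and $\log\psi\le\ell'\log\varphi$; a point of $\{\varphi=0\}$ where $\psi>0$ would, via the sub-mean-value inequality for $\psi$, furnish nearby points along which the ratio tends to $0$, so in fact $\psi\le\varphi^{\ell'}$ on all of $B_\varphi(r_0)$, and hence $B_{\varphi^{\ell'}}(r)\subset B_\psi(r)$ for $0<r<r_0^{\ell'}$. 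Writing $\chi:=\varphi^{\ell'}\in\PSH(T,\Omega)$, Proposition~\ref{pro1} gives $\nu(T,\chi)=(\ell')^p\nu(T,\varphi)$; in the negative case I assume throughout that $\varphi$, like $\psi$, satisfies condition $(C)$, so that $\nu(T,\varphi)$ exists.

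It then suffices to prove $\nu(T,\psi)\ge\nu(T,\chi)$ when $T\ge0$ and $\nu(T,\psi)\le\nu(T,\chi)$ when $T\le0$, and for this I would interpolate by the weights $w_c:=\max(\psi,\,e^{c}\chi)$, $c\in\mathbb R$. Each $w_c$ lies in $\PSH(T,\Omega)$: it is non-negative and semi-exhaustive (since $\chi$ is), $\log w_c=\max(\log\psi,\log\chi+c)$ is plurisubharmonic, and $T\wedge(dd^cw_c)^p$ is well defined because $w_c$ is locally bounded. Two endpoint facts are immediate. First, $w_0=\chi$ on $B_\varphi(r_0)$, so $B_{w_0}(r)=B_\chi(r)$ for small $r$ and $\nu(T,w_0)=\nu(T,\chi)=(\ell')^p\nu(T,\varphi)$. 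Second, as $c\to-\infty$ one has $w_c\downarrow\psi$ with $0\le w_c-\psi\le e^{c}\chi\to0$ uniformly on compacta; consequently $B_{w_c}(r)\uparrow B_\psi(r)$ and, by the Bedford--Taylor/Demailly continuity of the Monge--Amp\`ere action of a psh current along decreasing sequences of bounded potentials, $\nu(T,w_c,r)\to\nu(T,\psi,r)$ for a.e.\ $r$, so $\nu(T,w_c)\to\nu(T,\psi)$. The whole game is then the monotonicity of $c\mapsto\nu(T,w_c)$: since $c\le c'$ gives $w_c\le w_{c'}$, one expects $\nu(T,w_c)\ge\nu(T,w_{c'})$ when $T\ge0$ and the reverse inequality when $T\le0$. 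I would obtain this exactly as Demailly does for closed positive currents — a Stokes computation on the region between two level sets $\{w_c<r_1\}$ and $\{w_{c'}<r_2\}$, comparing $(dd^cw_c)^p$ and $(dd^cw_{c'})^p$ through the coincidence decomposition of the two maxima — but routed through Lemma~\ref{lem1}, so that the failure of $T$ to be closed enters only as terms built from $\nu(dd^cT,\cdot,t)/t$, which are non-negative, integrable (condition $(C)$ when $T\le0$), and vanish as $r_1\to0$. Granting the monotonicity, $\nu(T,\psi)=\lim_{c\to-\infty}\nu(T,w_c)\ge\nu(T,w_0)=(\ell')^p\nu(T,\varphi)$ when $T\ge0$, and the reverse when $T\le0$.

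Letting $\ell'\to\ell$ along $\mathscr V(\ell)\cap\mathcal I_T(\varphi)$ and using continuity of $\ell'\mapsto(\ell')^p\nu(T,\varphi)$ then yields $\nu(T,\psi)\ge\ell^p\nu(T,\varphi)$ ($T\ge0$) and $\nu(T,\psi)\le\ell^p\nu(T,\varphi)$ ($T\le0$), which are the two bullets. For the final assertion, if $\log\psi\sim\ell\log\varphi$ near the pole, then for any $\ell',\ell''\in\mathscr V(\ell)\cap\mathcal I_T(\varphi)$ with $\ell'<\ell<\ell''$ one has $\varphi^{\ell''}\le\psi\le\varphi^{\ell'}$ on a neighbourhood of $\{\varphi=0\}$; feeding the two comparisons $\psi\le\varphi^{\ell'}$ and $\varphi^{\ell''}\le\psi$ through the interpolation step gives $(\ell')^p\nu(T,\varphi)\le\nu(T,\psi)\le(\ell'')^p\nu(T,\varphi)$ when $T\ge0$ (and $(\ell'')^p\nu(T,\varphi)\le\nu(T,\psi)\le(\ell')^p\nu(T,\varphi)$ when $T\le0$), and letting $\ell',\ell''\to\ell$ forces $\nu(T,\psi)=\ell^p\nu(T,\varphi)$.

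The step I expect to be the main obstacle is precisely the monotonicity of $c\mapsto\nu(T,w_c)$, equivalently the Demailly-type comparison ``$u\le v$ near a common pole implies $\nu(T,u)\ge\nu(T,v)$ (resp.\ $\le$)'' for a merely plurisubharmonic current. Making it rigorous means simultaneously checking that every $\max$-weight that occurs genuinely belongs to $\PSH(T,\Omega)$ (well-posedness of $T\wedge(dd^c\cdot)^p$ for bounded potentials against a psh current), pushing the Stokes/Lelong--Jensen computation cleanly through the coincidence set of the $\max$, and keeping the non-closed contribution $dd^cT$ under control by condition $(C)$ — which is also exactly what guarantees that $\nu(T,\psi)$ exists as a limit.
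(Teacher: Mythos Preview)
Your route differs from the paper's. You interpolate with the maxima $w_c=\max(\psi,e^{c}\varphi^{\ell'})$ and want to extract the inequality from a monotonicity of $c\mapsto\nu(T,w_c)$; the paper instead uses the \emph{additive} perturbation $\Psi_\epsilon:=\psi+\epsilon\varphi^{\ell}$. After replacing $\psi$ by a suitable power $\psi^k$ so that $\psi/\varphi^{\ell}\to0$, one has $\Psi_\epsilon\sim\epsilon\varphi^{\ell}$ near the pole, hence $B_{\Psi_\epsilon}(\rho)=B_{\epsilon\varphi^{\ell}}(\rho)$ for small $\rho$, and the decisive inequality is the elementary multilinear expansion
\[
T\wedge(dd^c\Psi_\epsilon)^p=T\wedge\bigl(dd^c\psi+\epsilon\,dd^c\varphi^{\ell}\bigr)^p\ \ge\ T\wedge(\epsilon\,dd^c\varphi^{\ell})^p\qquad(T\ge0),
\]
with the inequality reversed when $T\le0$. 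This immediately yields $\nu(T,\Psi_\epsilon)\ge\nu(T,\varphi^{\ell})$ (resp.\ $\le$), and one then sends $\epsilon\to0$ and $r\to0$; in the negative case the passage from $\nu(T,\Psi_\epsilon,r)$ to $\nu(T,\Psi_\epsilon)$ is handled by the function $f$ of Theorem~\ref{theo1} via Equality~(\ref{eq 1.2}). No Stokes computation through a coincidence set of a maximum is needed, and Proposition~\ref{pro1} supplies the factor $\ell^p$ at the end.

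The monotonicity step you single out is a genuine gap, not just a technicality. Demailly's $\max$-argument for closed currents does not actually establish that $c\mapsto\nu(T,w_c)$ is monotone; it shows that $\nu(T,w_c)=\nu(T,e^{c}\chi)=\nu(T,\chi)$ because $w_c$ \emph{coincides} with $e^{c}\chi$ on a full neighbourhood of the pole, and then compares $\nu(T,w_c,r)$ with $\nu(T,\psi,r')$ via the outer coincidence region. Transporting that through Lemma~\ref{lem1} for a merely plurisubharmonic $T$ introduces mixed terms in $dd^cT$, $dd^cw_c$, $dd^c\chi$ that you have not bounded, and condition~$(C)$ on $\psi$ alone says nothing about $\nu(dd^cT,w_c,t)/t$ or $\nu(dd^cT,\chi,t)/t$. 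Two smaller points: you impose condition~$(C)$ on $\varphi$ in the negative case, which the statement does not assume; and asserting $w_c\in\PSH(T,\Omega)$ ``because $w_c$ is locally bounded'' skips over the fact that well-definedness of $T\wedge(dd^cw_c)^p$ is part of the definition and $T$ is not closed. The paper's additive perturbation bypasses all of this, since $\Psi_\epsilon=\psi+\epsilon\varphi^{\ell}$ stays inside $\PSH(T,\Omega)$ by assumption on $\psi$ and $\varphi^{\ell}$, and the comparison of Monge--Amp\`ere measures is a one-line binomial inequality rather than a Stokes argument.
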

    This type of theorem is called a ``comparison theorem''; such a statement has been proved by Demailly in the case of positive closed currents. Furthermore, one can deduce the invariance of  the Lelong-Demailly numbers under changes of coordinate systems.
    \begin{proof}
        One can replace $\psi$ by $\psi^k$ with $1<k\in\mathcal I_T(\psi)$, in order to assume that $$\liminf_{\varphi(z)\to0}\frac{\log \psi(z)}{\log \varphi(z)}> \ell.$$
        As a consequence $$\lim_{\varphi(z)\to0}\frac{\psi(z)}{\varphi(z)^\ell}=0.$$
        For $\epsilon>0$ small sufficiently, we set $\Psi_\epsilon:=\psi+\epsilon\varphi^\ell \underset{\varphi(z)\in\mathscr V(0)}\sim\epsilon\varphi^\ell$. Thanks to the dominated convergence theorem,
        $$\lim_{\epsilon\to0}\frac1{r^p}\int_{\Psi_\epsilon<r}T\w\beta_{\Psi_\epsilon}^p =\frac1{r^p}\int_{\psi<r}T\w\beta_{\psi}^p.$$
        \begin{itemize}
          \item \textit{First assume $T\geq 0$.} As the function $\nu(T,\Psi_\epsilon,.)$ is increasing then
            $$\begin{array}{lcl}
                \nu(T,\Psi_\epsilon,r)&=&\ds\frac1{r^p}\int_{\Psi_\epsilon<r}T\w\beta_{\Psi_\epsilon}^p\\
                & \geq &\ds \lim_{\rho\to0}\frac1{\rho^p}\int_{\Psi_\epsilon<\rho}T\w\beta_{\Psi_\epsilon}^p =\lim_{\rho\to0}\frac1{\rho^p}\int_{\epsilon\varphi^\ell<\rho}T\w\beta_{\Psi_\epsilon}^p \\
                & \geq &\ds\lim_{\rho\to0}\frac1{\rho^p}\int_{\epsilon\varphi^\ell<\rho}T\w\beta_{\epsilon\varphi^\ell}^p =\nu(T,\varphi^\ell).
              \end{array}$$

            because $\Psi_\epsilon\sim\epsilon\varphi^\ell$ and $T\w(dd^c(\psi+\epsilon\varphi^\ell))^p\geq T\w(\epsilon dd^c(\varphi^\ell))^p$. If $\epsilon\to0$, we obtain $\nu(T,\psi,r)\geq\nu(T,\varphi^\ell)$. Hence, if $r\to0$,  $\nu(T,\psi)\geq\nu(T,\varphi^\ell)$.
          \item \textit{Now, assume $T\leq0$.} Thanks to Theorem \ref{theo1}, Equality (\ref{eq 1.2}) gives
            $$\begin{array}{lcl}
                \ds\nu(T,\Psi_\epsilon,r)+\int_0^r\frac{\nu(dd^cT,\Psi_\epsilon,t)}t\left(\frac{t^p}{r^p}-1\right)dt & \leq &\ds \nu(T,\Psi_\epsilon) \\
                & = &\ds  \lim_{\rho\to0}\frac1{\rho^p}\int_{\Psi_\epsilon<\rho}T\w\beta_{\Psi_\epsilon}^p
              \end{array}$$
            and a similar calculation proves that $$\nu(T,\Psi_\epsilon,r)+\int_0^r\frac{\nu(dd^cT,\Psi_\epsilon,t)}t\left(\frac{t^p}{r^p}-1\right)dt \leq \nu(T,\varphi^\ell).$$
            So if $\epsilon\to0$ we obtain
            $$\nu(T,\psi,r)+\int_0^r\frac{\nu(dd^cT,\psi,t)}t\left(\frac{t^p}{r^p}-1\right)dt \leq \nu(T,\varphi^\ell).$$
            When $r\to0$, the last inequality gives $\nu(T,\psi)\leq \nu(T,\varphi^\ell).$
        \end{itemize}
        In the particular case $\log \psi(z)\sim\ell\log \varphi(z)$, for every $\epsilon>0$, there exists $\eta>0$ so that if $|\varphi(z)|<\eta$, then $\varphi(z)^{\ell(1+\epsilon)}\leq \psi(z)\leq \varphi(z)^{\ell(1-\epsilon)}$ and for $\epsilon$ small enough, $\ell(1+\epsilon),\ \ell(1-\epsilon)\in\mathscr V(\ell)\subset\mathcal I_T(\varphi)$. Therefore we can apply the previous inequalities to obtain $\ell^p(1+\epsilon)^p\nu(T,\varphi)\leq\nu(T,\psi)\leq \ell^p(1-\epsilon)^p\nu(T,\varphi)$ (resp. $\ell^p(1-\epsilon)^p\nu(T,\varphi)\leq\nu(T,\psi)\leq \ell^p(1+\epsilon)^p\nu(T,\varphi)$ in the case $T\geq0$ (resp. $T\leq0$). Hence, when $\epsilon\to0$, we obtain $\nu(T,\psi)=\ell^p\nu(T,\varphi)$.
    \end{proof}
\section*{Acknowledgements}
     I thank Professor Jean-Pierre Demailly for several suggestions which enabled me to clarify and improve the original form of this note. I thank also Professor Khalifa Dabbek for useful discussions concerning this work.

\end{document}